\documentclass[11pt,reqno]{amsart}
\usepackage{amsmath,amsxtra,amssymb,latexsym, amscd,amsthm}
\usepackage[unicode]{hyperref}
\usepackage[top=2cm, bottom=2.2cm, left=2cm, right=2cm]{geometry}
\usepackage[mathscr]{eucal}
\usepackage{epsfig, amssymb,amsmath,float}
\usepackage{multirow}
\usepackage[norule]{footmisc}
\newtheorem{definition}{Definition}[section]
\newtheorem{lemma}{Lemma}[section]

\newtheorem{theorem}{Theorem}[section]
\newtheorem{corollary}{Corollary}[section]
\newtheorem{remark}{Remark}[section]

\newtheorem{proposition}{Proposition}

\def\Z{{\mathbb Z}}
\title{On spectral simple modules over Leavitt path algebras}

\author{Nguyen Bich Van}
\begin{document}
\maketitle

\begin{abstract}
In this work we describe all simple spectral modules over Leavitt path algebras as induced modules from irreducible representations of the isotropy groups.
\end{abstract}
\section {Introduction}
In 2012 X.W. Chen \cite{Chen} and in 2014 P. Ara and K.M. Rangaswamy \cite{AKR} constructed some classes of simple modules over Leavitt path algebras.  In this work we try to use Steinberg algebras to describe these modules as induced modules from irreducible representations of the isotropy groups.
For convenience of the readers at first we recall some useful facts from \cite{Chen}, \cite{St} and \cite{AKR}.
\subsection{Induced representations of groupoid algebras}

Let $G$ be an ample groupoid, $K$ be a field, $A_K(G)$ be the groupoid algebra of $G$. For each element $x\in G^{(0)}$ we define
\begin{definition} 
The isotropy group of $x\in G^{(0)}$ is
$$G_x=\{\alpha\in G|s(\alpha)=r(\alpha)=x\}.$$
\end{definition}
\begin{definition}
The orbit of $x\in G^{(0)}$ is
$$O_x=\{y\in G^{(0)}|\exists \alpha\in G: s(\alpha)=y, r(\alpha)=x\}$$
\end{definition}
\begin{remark}
If the orbits of $x$ and $y$ are the same, then $G_x\simeq G_y$
\end{remark}
Let $L_x=\{\alpha\in G|r(\alpha)=x\}$ and $KL_x$ be the vector space with the basis $L_x$. Consider the action of $G_x$ on $L_x$ by right multiplication. This action induces a right $KG_x$-module structure on $KL_x$. 

The left $A_K(G)$-module structure on $KL_x$ is defined by:
\begin{equation}
\label{schuz}
ft=\sum_{y\in L_x}f(yt^{-1})y\mbox{ }\forall f\in A_K(G),\forall t\in L_x.
\end{equation}
\begin{proposition} (\cite{St})\label{char_act} We have
\begin{equation}
1_{B}t=\begin{cases}
Bt\quad\text{if}\ s(t)\in B^{-1}B\\
0\quad\text{else}
\end{cases}
\end{equation}
for all compact bisections $B$ of $G$, for all $t\in L_x$. Consequently, $KL_x$ is a well-defined $A_K(G)$-$KG_x$ bimodule.
\end{proposition}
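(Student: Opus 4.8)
The plan is to treat the displayed formula as the computational core of the statement and then to read the bimodule structure off from it. The first observation is that, by the definition of the Steinberg algebra, every $f\in A_K(G)$ is a finite $K$-linear combination of indicator functions $1_B$ of compact open bisections $B$, and the formula \eqref{schuz} is $K$-linear in $f$; so it is enough to understand $1_B t$ for a single compact bisection $B$ and a fixed $t\in L_x$. Substituting $f=1_B$ into \eqref{schuz} gives $1_B t=\sum_{y\in L_x}1_B(yt^{-1})\,y$, and since $1_B(yt^{-1})$ is $1$ when $yt^{-1}\in B$ and $0$ otherwise, the sum collapses to $1_B t=\sum_{\,y\in L_x,\ yt^{-1}\in B}y$. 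Thus the whole content of the first part is to count, for fixed $t$, the elements $y\in L_x$ with $yt^{-1}\in B$.

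This count is the key step, and it is where I expect the only real subtlety to lie. Writing $yt^{-1}=\beta$ with $\beta\in B$ is the same as $y=\beta t$, so the contributing $y$ are exactly the elements $\beta t$ for those $\beta\in B$ composable with $t$. Here I would invoke the defining feature of a bisection, namely that both $s$ and $r$ restrict to injective maps on $B$: composability of $\beta\in B$ with $t$ forces the matching unit to be $s(t)$, and injectivity of the relevant structure map on $B$ then admits at most one such $\beta$. Such a $\beta$ exists precisely when $s(t)$ lies in the set of units $B^{-1}B=\{\beta^{-1}\beta:\beta\in B\}$, a short computation with the bisection property identifying $B^{-1}B$ with exactly the units arising this way. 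This yields the two-case formula, with $Bt$ standing for the unique element $\beta t\in L_x$ (indeed $r(\beta t)=x$, so $Bt\in L_x$). Keeping straight which unit is matched and the direction of composability is the main obstacle; once the conventions are fixed it becomes routine.

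The ``consequently'' then follows from this formula in three steps. First, well-definedness of \eqref{schuz}: for fixed $t$ the support of $f$ is a finite union of compact bisections, and by the count above each contributes at most one $y$, so the sum is finite and $ft\in KL_x$. Second, that \eqref{schuz} is a left $A_K(G)$-action: by bilinearity it suffices to check $1_{B_1}(1_{B_2}t)=(1_{B_1}\!\ast 1_{B_2})t$, which reduces, via $1_{B_1}\!\ast 1_{B_2}=1_{B_1B_2}$ for bisections, to applying the formula twice and using associativity in $G$. Third, the right $KG_x$-action $t\cdot g=tg$ is a genuine action (composability and the unit of $G_x$ are immediate since $r(t)=x$), and it commutes with the left action: for $g\in G_x$ one checks $(1_Bt)g=1_B(tg)$ directly from the formula, both sides equalling $\beta(tg)$ for the same unique $\beta\in B$. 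Combining these shows that $KL_x$ is an $A_K(G)$--$KG_x$ bimodule.
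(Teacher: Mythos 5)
The paper states this proposition without proof, simply citing \cite{St}, so there is no in-paper argument to compare against; your proof is correct and is essentially the standard one from that reference (reduce to a single indicator $1_B$, use injectivity of the source/range restrictions on a bisection to see that at most one term of \eqref{schuz} survives and that $B^{-1}B$ is exactly the relevant set of units, then deduce finiteness of the sum, the action axiom via $1_{B_1}\ast 1_{B_2}=1_{B_1B_2}$ for compact open bisections, and commutation with the right $KG_x$-action). The only point requiring care, which you flag appropriately, is fixing the composability convention so that the matched unit is indeed $s(t)$; with the conventions of this paper ($\alpha\beta$ defined when $r(\alpha)=s(\beta)$, so $B^{-1}B$ is identified with $r(B)$) your count goes through.
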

\begin{definition}
Let $x\in G^{(0)}$, $V$ be a $KG_x$-module. The corresponding induced $A_K(G)$-module is $Ind_x(V)=KL_x\otimes_{KG_x} V$.
\end{definition}
\begin{theorem} \label{simp}If $V$ is a simple $KG_x$-module, then $Ind_x(V)$ is a simple $A_K(G)$-module.
\end{theorem}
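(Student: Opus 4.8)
The plan is to show that every nonzero $A_K(G)$-submodule $W$ of $M:=Ind_x(V)=KL_x\otimes_{KG_x}V$ equals $M$. First I would record the module-theoretic skeleton. The right $G_x$-action on $L_x$ is free (groupoid cancellation), and the source map induces a bijection $L_x/G_x\to O_x$, so a transversal for the orbits is indexed by $O_x$; I may take the unit $x\in L_x$ as the representative of the class over $y=x$. Hence $KL_x$ is a free right $KG_x$-module and
\[
M=\bigoplus_{y\in O_x}\tau_y\otimes V,
\]
where $\tau_y\in L_x$ is a chosen morphism with source $y$ and range $x$, and $\tau_x=x$. In particular $v\mapsto x\otimes v$ embeds $V$ as the summand $x\otimes V\cong V$.

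The heart of the argument is an isolation lemma: for every nonzero $w\in W$ there is $a\in A_K(G)$ with $0\neq aw\in x\otimes V$. Writing $w=\sum_{i=1}^{n}\tau_{y_i}\otimes v_i$ with the $y_i\in O_x$ distinct and $v_i\neq 0$, I would proceed in two steps. Since $G^{(0)}$ is Hausdorff I can choose a compact open $U\subseteq G^{(0)}$ with $y_1\in U$ and $y_i\notin U$ for $i\geq 2$; Proposition \ref{char_act} then gives $1_U\tau=\tau$ or $0$ according as $s(\tau)\in U$ or not, so $1_U w=\tau_{y_1}\otimes v_1\neq 0$, isolating a single source-fibre. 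Next, by ampleness there is a compact open bisection $B\ni\tau_{y_1}^{-1}$; applying Proposition \ref{char_act} again computes $1_B\tau_{y_1}=\tau_{y_1}^{-1}\tau_{y_1}=x$, whence $1_B(\tau_{y_1}\otimes v_1)=x\otimes v_1\neq 0$. Taking $a=1_B 1_U$ proves the lemma.

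With the lemma in hand, $W\cap(x\otimes V)\neq 0$. To upgrade this to $x\otimes V\subseteq W$, I would recover the $KG_x$-action inside the fibre: for $g\in G_x$ choose a compact open bisection $B_g\ni g$; Proposition \ref{char_act} yields $1_{B_g}(x\otimes v)=g\otimes v=x\otimes(gv)$, the last equality being the tensor relation $x\cdot g\otimes v=x\otimes g\cdot v$. Hence $\{v\in V:\ x\otimes v\in W\}$ is a nonzero $KG_x$-submodule of $V$, so by simplicity it is all of $V$, giving $x\otimes V\subseteq W$. Finally, for any $y\in O_x$ pick a compact open bisection $B\ni\tau_y$; Proposition \ref{char_act} gives $1_B(x\otimes v)=\tau_y\otimes v$, so $A_K(G)(x\otimes V)$ contains every summand $\tau_y\otimes V$ and therefore equals $M$. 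Combining, $M=A_K(G)(x\otimes V)\subseteq W$, so $W=M$.

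The main obstacle is precisely the isolation lemma. Because $\{x\}$ need not be open in $G^{(0)}$, there is in general no single idempotent of $A_K(G)$ projecting $M$ onto the fibre $x\otimes V$, so one cannot simply ``evaluate at $x$''. The separate-then-transport device above is what replaces such a projection, and it is here that both the Hausdorffness of the unit space (to separate the finitely many sources $y_i$) and ampleness (to supply the transporting bisection) are essential; the remaining steps are routine bookkeeping with Proposition \ref{char_act} and the freeness of $KL_x$ over $KG_x$.
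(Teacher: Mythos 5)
Your argument is correct. Note, however, that the paper does not prove this theorem at all: it is imported from \cite{St} (it sits in a block of quoted results such as Proposition \ref{char_act} and Theorem \ref{specmod}), so there is no in-paper proof to compare against. What you wrote is essentially the standard argument from that source: decompose $Ind_x(V)=\bigoplus_{y\in O_x}\tau_y\otimes V$ using freeness of the $G_x$-action on $L_x$, isolate one fibre with an indicator $1_U$ of a compact open subset of the Hausdorff unit space, transport it to $x\otimes V\cong V$ (this is Proposition \ref{res_in}) via a compact open bisection through $\tau_{y_1}^{-1}$, invoke simplicity of $V$, and transport back; all the individual computations with $1_B$ are consistent with Proposition \ref{char_act}.
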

\begin{remark}\label{isom}
If $x,y\in G^{(0)}$ are in the same orbit,i.e. $\exists \alpha\in G: s(\alpha)=y,r(\alpha)=x$ we know that $G_x\simeq G_y$. Let $V$ be a $KG_x$-module, then $V$ can be made into a $KG_y$-module by setting  $\gamma u=(\alpha^{-1}\gamma\alpha)u$, where $\gamma\in G_y, u\in V$. Then $Ind_x(V)\simeq Ind_y(V)$ via the map $t\otimes u\mapsto t\otimes t\alpha^{-1}\otimes u$ for $t\in L_x$ and $u\in V$.
\end{remark}
\subsection{Restricted modules}
\begin{definition}(\cite{St}) Let $W$ be an $A_K(G)$-module.
For $x\in G^{(0)}$ we define 
$$N_x=\{U \text{is a compact open subset of } G^{(0)}|x\in U\}$$ and $Res_x(W):=\cap_{U\in N_x}1_UW$.
\end{definition}
\begin{proposition}(\cite{St}) $Res_x(W)$ becomes a $KG_x$-module under the  following action:  for $t\in G_x, w\in Res_x(W)$
 $tw=1_Bw$, where $B$ is a compact bisection of $G$ s.t. $t\in B$.
\end{proposition}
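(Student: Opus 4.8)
The plan is to verify, in order, three things: that the proposed formula $tw = 1_B w$ does not depend on the compact bisection $B$ chosen with $t\in B$, that its output again lies in $Res_x(W)$, and finally that it obeys the axioms of a $KG_x$-module. Throughout I will lean on two elementary convolution identities, valid for a compact open bisection $B$ and a compact open $U\subseteq G^{(0)}$, namely $1_B 1_U = 1_{B\cap s^{-1}(U)}$ and $1_U 1_B = 1_{B\cap r^{-1}(U)}$, together with the defining feature that each $w\in Res_x(W)$ satisfies $1_U w = w$ for every compact open $U$ with $x\in U$ (equivalently $w\in 1_U W$, as $1_U$ is idempotent).

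For well-definedness, suppose $t\in B\cap B'$ for two compact bisections. Since $G$ is ample I may pick a compact open bisection $B''$ with $t\in B''\subseteq B\cap B'$, and then a compact open $U\ni x$ contained in $s(B'')$. Because $B''\subseteq B$ and both are bisections, the unique element of $B$ lying over a source point $u\in U$ is forced to coincide with the unique element of $B''$ over $u$; hence $B\cap s^{-1}(U)=B''\cap s^{-1}(U)$. Using $w=1_U w$ this yields $1_B w = 1_B 1_U w = 1_{B''} 1_U w = 1_{B''} w$, and symmetrically $1_{B'} w = 1_{B''} w$, so $1_B w = 1_{B'} w$. The same mechanism shows the output lands in $Res_x(W)$: fixing a compact open $V\ni x$, continuity of $r$ and the fact that $r(t)=x\in V$ let me choose a compact open $U\ni x$ inside $s(B)$ so small that every $b\in B$ with $s(b)\in U$ has $r(b)\in V$; then $B\cap s^{-1}(U)\subseteq B\cap r^{-1}(V)$ gives $1_V 1_B 1_U = 1_B 1_U$, whence $1_V(1_B w)=1_V 1_B 1_U w = 1_B 1_U w = 1_B w$. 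As $V$ was arbitrary, $1_B w\in \cap_{V\in N_x}1_V W = Res_x(W)$.

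It then remains to check the module axioms. Additivity and $K$-linearity in $w$ are immediate from linearity of the $A_K(G)$-action. For the identity element $x\in G_x$ I take $B$ to be a compact open neighbourhood of $x$ lying inside $G^{(0)}$, so that $1_B=1_U$ with $x\in U$ and $xw = 1_U w = w$. For multiplicativity, given $t_1,t_2\in G_x$ I choose compact open bisections $B_1\ni t_1$ and $B_2\ni t_2$; then $B_1 B_2$ is again a compact open bisection, it contains $t_1 t_2$, and $1_{B_1}1_{B_2}=1_{B_1 B_2}$. Hence $(t_1 t_2)w = 1_{B_1 B_2} w = 1_{B_1}(1_{B_2}w) = t_1(t_2 w)$, where the last step uses well-definedness applied to the element $t_2 w = 1_{B_2}w$ of $Res_x(W)$.

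The genuinely delicate points are the first two, and both rest on the same idea: because $w$ is fixed by every $1_U$ with $x\in U$, I am free to shrink the source neighbourhood $U$ until the bisections in play become indistinguishable near $t$. The one feature demanding care is that $G$ need not be Hausdorff, so I cannot intersect compact open sets freely; I sidestep this by only ever passing to a smaller compact open bisection supplied by the ample basis, and by controlling everything through the restrictions $s|_B$ and $r|_B$, which remain homeomorphisms onto open subsets of $G^{(0)}$.
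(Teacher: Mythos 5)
Your argument is correct. Note that the paper itself gives no proof of this proposition --- it is quoted from \cite{St} --- so there is nothing internal to compare against; what you have written is essentially the standard argument from Steinberg's work. The two genuinely delicate points are exactly the ones you isolate and handle properly: independence of the choice of $B$, obtained by shrinking to a basic compact open bisection $B''\subseteq B\cap B'$ and then cutting down the source side with some $U\in N_x$ contained in $s(B'')$ so that $1_B1_U=1_{B''}1_U$ (using that $1_Uw=w$); and stability of $Res_x(W)$ under the action, obtained by shrinking $U$ further so that $B\cap s^{-1}(U)\subseteq r^{-1}(V)$. The remaining axioms reduce, as you say, to $1_{B_1}1_{B_2}=1_{B_1B_2}$ and to taking $B\subseteq G^{(0)}$ for the unit, and your care in never forming intersections of compact sets directly (only passing to smaller basis elements) correctly sidesteps the non-Hausdorff issue. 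I see no gap.
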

\begin{proposition}\label{res_in}\cite{St}
Let $V$ be a $KG_x$-module. Then $Res_xInd_x(V)\simeq x\otimes V\simeq V$ as $KG_x$-modules.
\end{proposition}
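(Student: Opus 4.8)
The plan is to realize $Ind_x(V)$ explicitly as a direct sum indexed by the $G_x$-orbits on $L_x$, show that each generator $1_U$ of the restriction acts on this decomposition as a coordinate projection, and then read off the intersection. First I would analyze $KL_x$ as a right $KG_x$-module. The group $G_x$ acts freely on the right of $L_x$ by multiplication (if $t\gamma=t$ then $\gamma$ is forced to be the identity $x$ of $G_x$), so $KL_x$ is a \emph{free} right $KG_x$-module whose basis is any transversal of the orbit space. Since $s(t\gamma)=s(t)$, the orbit of $t$ is exactly $\{\alpha\in L_x : s(\alpha)=s(t)\}$, so the orbits are indexed by the sources $s(t)\in O_x$; in particular the orbit $G_x$ itself corresponds to the source $x$, and I would choose the unit $x$ as its representative. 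Picking representatives $\{t_\lambda\}$ then yields $KL_x=\bigoplus_\lambda t_\lambda KG_x$, whence
$$Ind_x(V)=KL_x\otimes_{KG_x}V=\bigoplus_\lambda (t_\lambda\otimes V),$$
with each $t_\lambda\otimes V\cong V$ as vector spaces and $x\otimes V$ one distinguished summand (coming from $x\cdot KG_x\otimes_{KG_x}V\cong KG_x\otimes_{KG_x}V\cong V$).

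Next I would compute the action of a generator $1_U$, $U\in N_x$, on each summand. Because the $A_K(G)$-action touches only the left tensor factor, $1_U(t\otimes v)=(1_U t)\otimes v$, and Proposition \ref{char_act} applied to the unit bisection $B=U$ (for which $B^{-1}B=U$) gives $1_U t=t$ when $s(t)\in U$ and $1_U t=0$ otherwise. Thus $1_U$ acts as the identity on $t_\lambda\otimes V$ when $s(t_\lambda)\in U$ and as zero otherwise, so that $1_U\,Ind_x(V)=\bigoplus_{\lambda:\,s(t_\lambda)\in U}(t_\lambda\otimes V)$ is precisely the span of those coordinate summands whose source lies in $U$.

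Then I would take the intersection over all $U\in N_x$. Since each $1_U\,Ind_x(V)$ is a sub-direct-sum with respect to the fixed decomposition, $Res_x Ind_x(V)=\bigcap_{U\in N_x}1_U\,Ind_x(V)$ is the direct sum of those $t_\lambda\otimes V$ with $s(t_\lambda)\in\bigcap_{U\in N_x}U$. Here I would invoke the topological input that, $G$ being ample, the unit space $G^{(0)}$ is locally compact, Hausdorff and totally disconnected, so that its compact open subsets separate points and $\bigcap_{U\in N_x}U=\{x\}$. Only the summand with $s(t_\lambda)=x$, namely $x\otimes V$, survives, giving $Res_x Ind_x(V)=x\otimes V$. \textbf{This intersection step is the main obstacle:} it is where the precise topology of $G^{(0)}$ must be used, and where one has to justify that the intersection of the downward-directed family $\{1_U\,Ind_x(V)\}$ distributes over the (possibly infinite) direct sum — which is legitimate here exactly because each $1_U\,Ind_x(V)$ is spanned by a subset of the fixed coordinate summands.

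Finally I would upgrade the vector-space identification $x\otimes V\cong V$, $v\mapsto x\otimes v$, to an isomorphism of $KG_x$-modules. For $\gamma\in G_x$ choose a compact bisection $B\ni\gamma$; then $\gamma\cdot(x\otimes v)=(1_B x)\otimes v$, and Proposition \ref{char_act} together with the injectivity of the source (equivalently range) map on the bisection $B$ forces $1_B x=\gamma$, so that $\gamma\cdot(x\otimes v)=\gamma\otimes v=(x\gamma)\otimes v=x\otimes\gamma v$. This matches the given $G_x$-action on $V$, completing the chain $Res_x Ind_x(V)=x\otimes V\cong V$ of $KG_x$-modules.
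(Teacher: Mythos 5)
The paper states this proposition without proof, simply citing \cite{St}, so there is no in-paper argument to compare against; your proof is correct and is essentially the standard (Steinberg) argument. The key ingredients — the free right $G_x$-action on $L_x$ with orbits indexed by the sources $s(t)\in O_x$, giving $Ind_x(V)=\bigoplus_\lambda t_\lambda\otimes V$; the computation via Proposition \ref{char_act} that $1_U(t\otimes v)$ is $t\otimes v$ or $0$ according to whether $s(t)\in U$; the fact that compact open subsets of the Hausdorff, totally disconnected unit space separate points, so $\bigcap_{U\in N_x}U=\{x\}$ and only the summand $x\otimes V$ survives; and the final check that $\gamma(x\otimes v)=x\otimes\gamma v$ — are all present and correctly justified.
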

\begin{lemma} (\cite{St})
If $W$ is a simple $A_K(G)$-module, then for any $x\in G^{(0)}$ $Res_x(W)$ is either zero or a simple $KG_x$-module.
\end{lemma}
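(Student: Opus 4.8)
The plan is to assume $Res_x(W)\neq 0$ and to prove that every nonzero $KG_x$-submodule $V\subseteq Res_x(W)$ already exhausts $Res_x(W)$; since $Res_x(W)$ is a $KG_x$-module by the preceding proposition, this is exactly simplicity. The bridge between $A_K(G)$-modules and $KG_x$-modules will be the induction functor together with the identification $Res_xInd_x(V)\simeq x\otimes V\simeq V$ from Proposition \ref{res_in}.

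First I would produce an $A_K(G)$-linear comparison map $\Phi\colon Ind_x(V)=KL_x\otimes_{KG_x}V\to W$ realizing the inclusion $V\hookrightarrow Res_x(W)\subseteq W$. Concretely it is given by $t\otimes v\mapsto 1_B v$, where $B$ is any compact bisection of $G$ containing $t$. The content lies in well-definedness: independence of the chosen $B$ and compatibility with the $KG_x$-balancing of the tensor product. Both reduce to the defining property of $Res_x(W)$, namely $1_Uv=v$ for every compact open $U\subseteq G^{(0)}$ with $x\in U$ and every $v\in V$, used together with the multiplication rule for indicators of bisections recorded in Proposition \ref{char_act}; the right $KG_x$-action on $KL_x$ is by construction compatible with the action $tv=1_Bv$ that makes $Res_x(W)$ a $KG_x$-module. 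The upshot I want is that $\Phi$ restricted to the canonical copy $x\otimes V\subseteq Ind_x(V)$ equals the inclusion, i.e.\ $\Phi(x\otimes v)=v$; phrased invariantly, $Res_x(\Phi)$ is the inclusion $\iota\colon V\hookrightarrow Res_x(W)$ under the isomorphism of Proposition \ref{res_in}.

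Next I would invoke simplicity of $W$. Since $\Phi(x\otimes v)=v\neq 0$ for any $0\neq v\in V$, the image of $\Phi$ is a nonzero $A_K(G)$-submodule of $W$, hence all of $W$, so $\Phi$ is surjective. Now take an arbitrary $w\in Res_x(W)$ and write $w=\Phi(\xi)$ with $\xi=\sum_i t_i\otimes v_i\in Ind_x(V)$ a finite sum. Because $w\in Res_x(W)$ we have, for every compact open $U\ni x$, the identity $w=1_Uw=\Phi(1_U\xi)$ using $A_K(G)$-linearity of $\Phi$. The crucial computation is that $1_U\xi\in x\otimes V$ once $U$ is small enough: by Proposition \ref{char_act}, $1_U(t_i\otimes v_i)=t_i\otimes v_i$ if $s(t_i)\in U$ and $0$ otherwise, and since $G^{(0)}$ is Hausdorff and the sum is finite, I may shrink $U$ so that only the indices with $s(t_i)=x$ survive. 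For those $t_i$ one has $r(t_i)=s(t_i)=x$, so $t_i\in G_x$ and hence $t_i\otimes v_i=x\otimes(t_iv_i)$ with $t_iv_i\in V$. Therefore $1_U\xi=x\otimes v'$ for some $v'\in V$, and $w=\Phi(x\otimes v')=v'\in V$. As $w$ was arbitrary this gives $Res_x(W)\subseteq V$, and the reverse inclusion is the hypothesis, so $V=Res_x(W)$.

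I expect the main obstacle to be the first step: checking that the recipe $t\otimes v\mapsto 1_Bv$ is independent of the bisection $B$ and descends through $\otimes_{KG_x}$, and that the resulting $\Phi$ is $A_K(G)$-linear. This is precisely where the localization identity $1_Uv=v$ for $v\in Res_x(W)$ must be combined carefully with the bisection-multiplication formula of Proposition \ref{char_act}, and where conventions on the orientation of the $A_K(G)$- and $KG_x$-actions have to be tracked. A cleaner but less self-contained route would be to quote the adjunction between $Ind_x$ and $Res_x$: then $\Phi$ is simply the image of the inclusion $V\hookrightarrow Res_x(W)$ under the adjunction isomorphism, $Res_x(\Phi)=\iota$ by naturality and Proposition \ref{res_in}, and the localization argument of the third paragraph finishes the proof verbatim.
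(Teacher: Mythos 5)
The paper does not actually prove this lemma---it is quoted from \cite{St}---so there is no internal proof to compare against; judged on its own, your argument is correct and is essentially the standard argument from that reference (build $\Phi\colon Ind_x(V)\to W$ out of a nonzero $KG_x$-submodule $V\subseteq Res_x(W)$ via $t\otimes v\mapsto 1_Bv$, use simplicity of $W$ to get surjectivity, then localize at $x$ to force $Res_x(W)\subseteq V$). The only points worth making explicit are that the well-definedness of $\Phi$ (independence of $B$) and the shrinking of $U$ so that only the $t_i$ with source and range both equal to $x$ survive rely on $G^{(0)}$ being Hausdorff and on $s$ and $r$ being injective on a bisection, and that with the paper's convention $L_x=r^{-1}(x)$ one must track which endpoint of $t$ equals $x$ when applying Proposition \ref{char_act}; with those conventions kept straight, every step goes through.
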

\begin{definition} [Spectral module]
Let $W$ be an $A_K(G)$-module. $W$ is called spectral if $\exists x\in G^{(0)}$ so that $Res_x(W)\neq 0$
\end{definition}
\begin{lemma}\cite{St}\label{noniso}
If $x,y\in G^{(0)}$ have distinct orbits, then induced modules of the form $Ind_x(V)$ and $Ind_y(V)$ are not isomorphic.
\end{lemma}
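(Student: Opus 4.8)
The plan is to use the restriction functor $Res_x$ as an invariant that separates the two induced modules. Suppose toward a contradiction that there is an $A_K(G)$-module isomorphism $\Phi\colon Ind_x(V)\to Ind_y(W)$, where $V$ and $W$ are nonzero modules over $KG_x$ and $KG_y$ respectively (the modules of interest are induced from simple, hence nonzero, modules). First I would record that $Res_x$ carries isomorphic modules to isomorphic $KG_x$-modules. Indeed, $\Phi$ commutes with the action of every idempotent $1_U$, so $\Phi(1_U Ind_x(V))=1_U Ind_y(W)$ for each $U\in N_x$; taking the intersection over all such $U$ shows $\Phi$ restricts to a bijection $Res_x Ind_x(V)\to Res_x Ind_y(W)$, and this bijection is $KG_x$-linear because the $KG_x$-action on a restricted module is itself given through the idempotents $1_B$. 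Hence $Res_x Ind_x(V)\simeq Res_x Ind_y(W)$ as $KG_x$-modules.

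By Proposition \ref{res_in} the left-hand side is $\simeq V\neq 0$, so it suffices to prove that $Res_x Ind_y(W)=0$ whenever $x$ and $y$ lie in distinct orbits; this is the heart of the argument. I would analyze the action of $1_U$ on a basis tensor $t\otimes w$ with $t\in L_y$ and $w\in W$. Using Proposition \ref{char_act} one gets $1_U(t\otimes w)=(1_U t)\otimes w$, where $1_U t=t$ precisely when $s(t)\in U$ and $1_U t=0$ otherwise. The decisive observation is that, as $t$ ranges over $L_y$, the source $s(t)$ ranges exactly over the orbit $O_y$; since distinct orbits force $x\notin O_y$, no basis tensor satisfies $s(t)=x$.

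Now take an arbitrary $\xi=\sum_{i=1}^{n}t_i\otimes w_i$ in $Ind_y(W)$ and assume $\xi\in Res_x Ind_y(W)$. Because $G^{(0)}$ is Hausdorff and totally disconnected, for each $i$ the distinct points $x$ and $s(t_i)$ can be separated by a compact open set, and intersecting the finitely many such sets produces a single $U\in N_x$ with $s(t_i)\notin U$ for every $i$. For this $U$ the computation above gives $1_U\xi=0$. On the other hand $\xi\in Res_x Ind_y(W)\subseteq 1_U Ind_y(W)$ and $1_U$ is idempotent, so $\xi=1_U\xi=0$. Thus $Res_x Ind_y(W)=0$, which contradicts $V\neq 0$, and the two induced modules cannot be isomorphic.

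The step I expect to be the main obstacle is exactly this reduction to a single separating $U$: orbits in an ample groupoid need not be closed, so in general one cannot find one compact open neighbourhood of $x$ that avoids the whole orbit $O_y$. The argument survives only because every element of the module is a \emph{finite} combination of basis tensors, which lets me separate $x$ from the finitely many sources $s(t_i)$ that actually occur. I would also double-check the bookkeeping of the first paragraph, namely that the $1_U$-equivariance of $\Phi$ genuinely descends to a $KG_x$-isomorphism of restricted modules, and confirm that Proposition \ref{res_in} is being applied with $V\neq 0$, which is automatic in the setting of simple modules.
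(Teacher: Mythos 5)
The paper does not prove this lemma---it is quoted from \cite{St}---so there is no in-text argument to compare against; your proof is correct and is essentially the standard one from the cited source, using $Res_x$ as an isomorphism invariant together with Proposition \ref{res_in} and the vanishing $Res_x\,Ind_y(W)=0$ for $x\notin O_y$. Your two delicate points are handled properly: the sources $s(t)$, $t\in L_y$, exhaust exactly $O_y$, and the finiteness of the support of an element of $KL_y\otimes_{KG_y}W$ is precisely what lets you choose one compact open $U\ni x$ avoiding all the (finitely many) relevant sources, even though $O_y$ need not be closed.
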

\begin{theorem}\label{specmod}(\cite{St})

Let $G$ be an ample groupoid and $D\subset G^{(0)}$ containing exactly one element from each orbit. Then there is a bijection between spectral simple $A_K(G)$-modules and pairs $(x,V)$, where $x\in D$, $V$ is a simple $KG_x$-module (taken up to isomorphism). The corresponding module is $Ind_x(V)$ . The finite dimensional simple $A_K(G)$-modules are spectral modules and correspond to those pais $(x,V)$, where the orbit of $x$ is finite and $V$ is a finite dimensional simple $KG_x$-module.

\end{theorem}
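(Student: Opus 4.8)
The plan is to set up two mutually inverse assignments between isomorphism classes of spectral simple $A_K(G)$-modules and isomorphism classes of pairs $(x,V)$ with $x\in D$ and $V$ simple over $KG_x$, and then to treat the finite-dimensional refinement separately. In the direction $(x,V)\mapsto Ind_x(V)$, Theorem \ref{simp} makes $Ind_x(V)$ simple and Proposition \ref{res_in} gives $Res_x Ind_x(V)\simeq V\neq 0$, so $Ind_x(V)$ is spectral; functoriality of $Ind_x=KL_x\otimes_{KG_x}(-)$ shows its class depends only on the class of $V$. For injectivity, suppose $Ind_x(V)\simeq Ind_{x'}(V')$ with $x,x'\in D$. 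Since $D$ meets each orbit exactly once, distinct points of $D$ lie in distinct orbits, so Lemma \ref{noniso} forces $x=x'$; applying $Res_x$ and Proposition \ref{res_in} then yields $V\simeq V'$. Thus the assignment is well defined and injective on isomorphism classes.

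The main work is surjectivity. Given a spectral simple module $W$, fix $y$ with $Res_y(W)\neq 0$; by the lemma on restriction of simple modules $V:=Res_y(W)\subseteq W$ is a simple $KG_y$-module, and I want $W\simeq Ind_y(V)$. I would construct a comparison map
$$\Phi\colon Ind_y(V)=KL_y\otimes_{KG_y}V\longrightarrow W,\qquad \Phi(t\otimes v)=1_B v,$$
where $B$ is any compact bisection with $t\in B$. The heart of the argument is verifying $\Phi$ is well defined and $A_K(G)$-linear: independence of the choice of $B$ rests on the fact that $1_Uv=v$ for every compact open $U\ni y$ (since $v\in 1_UW$ and $1_U$ is idempotent) together with the bisection calculus of Proposition \ref{char_act}; the balancing over $KG_y$ uses $1_B 1_{B'}=1_{BB'}$; and linearity again matches $1_{B'}(1_Bv)$ with the module structure on $KL_y$ described in Proposition \ref{char_act}. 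Once $\Phi$ is a homomorphism it is nonzero, because for a compact open $U\ni y$ we get $\Phi(y\otimes v)=1_Uv=v\neq 0$. As the domain is simple (Theorem \ref{simp}) and $W$ is simple, any nonzero homomorphism is an isomorphism, so $W\simeq Ind_y(V)$. Finally I would transport to the orbit representative $x\in D$: Remark \ref{isom} gives $Ind_y(V)\simeq Ind_x(V^{\alpha})$ for the appropriately twisted module $V^{\alpha}$, so $W$ corresponds to the pair $(x,V^{\alpha})$. I expect the well-definedness of $\Phi$ to be the principal obstacle; everything else is formal once that is in place.

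For the finite-dimensional statement I would first record a dimension count. In the right action of $G_x$ on $L_x$ by multiplication, each fiber of the source map $\alpha\mapsto s(\alpha)$ is a single free $G_x$-orbit and the set of fibers is in bijection with $O_x$; choosing one element of $L_x$ over each $y\in O_x$ exhibits $KL_x\simeq\bigoplus_{y\in O_x}KG_x$ as a right $KG_x$-module. Hence
$$Ind_x(V)=KL_x\otimes_{KG_x}V\simeq\bigoplus_{y\in O_x}V,\qquad \dim_K Ind_x(V)=|O_x|\cdot\dim_K V,$$
so $Ind_x(V)$ is finite dimensional exactly when $O_x$ is finite and $\dim_K V<\infty$. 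It then remains to show every finite-dimensional simple $W$ is spectral. Since $W$ is finite dimensional, the family $\{1_UW: U\subseteq G^{(0)}\text{ compact open}\}$ of subspaces is finite; I would pick $U_1$ with $1_{U_1}W$ minimal nonzero, so that $1_UW\in\{0,1_{U_1}W\}$ for every compact open $U\subseteq U_1$. A compactness argument — cover $U_1$ by compact opens, disjointify, and use $1_{U_1}=\sum 1_{\text{pieces}}$ — then produces a point $x\in U_1$ for which $1_UW=1_{U_1}W$ for every compact open $U\ni x$ inside $U_1$, whence $Res_x(W)\supseteq 1_{U_1}W\neq 0$. Therefore $W$ is spectral and, by the first part, $W\simeq Ind_x(Res_x W)$ with $Res_x W\subseteq W$ finite dimensional; finiteness of $\dim_K W$ combined with the dimension formula forces $|O_x|<\infty$. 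This matches the finite-dimensional simple modules precisely with the pairs $(x,V)$ having finite orbit and finite-dimensional $V$, completing the proof.
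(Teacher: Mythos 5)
The paper offers no proof of Theorem \ref{specmod}: it is quoted from \cite{St} and used as a black box, so there is no in-paper argument to measure yours against. What you have written is essentially a correct reconstruction of Steinberg's own proof, and the ingredients you invoke --- Theorem \ref{simp} and Proposition \ref{res_in} for well-definedness, Lemma \ref{noniso} for separating orbits, the lemma that nonzero restrictions of simple modules are simple, the comparison map $\Phi(t\otimes v)=1_Bv$ for surjectivity, and Remark \ref{isom} to move to the orbit representative in $D$ --- are exactly the right ones. Two points deserve tightening. First, in the spectrality argument for finite-dimensional $W$, the family $\{1_UW\}$ need not be \emph{finite} (a finite-dimensional space has infinitely many subspaces in general); what you actually need, and what finite-dimensionality does provide, is a minimal nonzero member, e.g.\ one of minimal dimension --- with that substitution your cover-and-disjointify argument goes through verbatim and yields $Res_x(W)\supseteq 1_{U_1}W\neq 0$. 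Second, the well-definedness of $\Phi$ in the choice of $B$ should be written out: for $t\in B\cap B'$, use that $v$ is fixed by $1_U$ for every compact open $U\ni y$ and that $B$ is a bisection to shrink $B$ to $B\cap B'$ without changing $1_Bv$ (and the same localization, applied to a neighbourhood of $y$ missing the relevant unit set, is what makes $1_{B'}1_Bv$ vanish exactly when Proposition \ref{char_act} returns $0$, which is the content of $A_K(G)$-linearity). Your dimension count $\dim_K Ind_x(V)=|O_x|\cdot\dim_KV$, via the identification of $KL_x$ with $\bigoplus_{y\in O_x}KG_x$ as right $KG_x$-modules, is correct and settles the finite-dimensional refinement. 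With those details filled in the argument is complete.
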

 In the next section we will use this theorem to classify all spectral simple modules (in particular,  finite dimensional modules) over Leavitt path algebra.
 Let $E=(E^0, E^1,s,r)$ be a digraph, $K$ be a field. 
\begin{definition}\label{path}
A finite path of length $n$ is $p=e_1e_2...e_n$, where $e_1,...,e_n\in E^1,r(e_i)=s(e_{i+1}),\quad i=1,2,...,n-1.$ The length of $p$ is denoted by $|p|$, $s(p):=s(e_1), r(p:=r(e_n).$. By convention a vertex $v\in E^0$ is called a trivial path. A vertex is a path of length $0$ and has $v$  as the source and the range.

A nontrivial finite path $p=e_1...e_n$  is called closed if $r(e_n)=s(e_1)$. 
 A closed path $p$ is called simple if $p\neq c^m,$ where $c\neq p$ is a closed path.

An infinite path is $q=e_1e_2...$, where $r(e_i)=s(e_{i+1}),\quad \forall i$. We denote by $F(E)$ the set of all finite paths, and by $E^\infty$ the set of all infinite paths. 
\end{definition}

Let $X=E^\infty\cup \{p\in F(E)|r(p)\quad \text{is not regular}\}$.
\begin{definition}\label{tail}
Two paths $x,y\in X$ are called tail-equivalent if $\exists \mu,\nu\in F(E), x'\in X$ such that $x=\mu x', y=\nu x'$. We write $x\sim y$.

This is an equivalence relation and we denote by $[x]$ the equivalence class of all paths which are tail-equivalent to $x$. 
\end{definition}
\begin{remark}
By Definition \ref{tail} and the convention in Definition \ref{path} it is easy to see that two finite paths $x,y\in X$ are tail-equivalent if and only if $r(x)=r(y)=w$, which is not regular. In this case $[x]=[y]=[w]$.
\end{remark}
\begin{remark}
For any closed path $d$, we can write $d=c^m,$ where $c$ is a simple closed path. Then $d^\infty=c^\infty$.
\end{remark}
\begin{definition}
An infinite path is called rational if it is tail-equivalent to $c^\infty$, where $c$ is a simple closed path, and called irrational otherwise. We denote by $E^\infty_{rat.}$ the set of all rational paths and by $E^\infty_{irrat.}$ the set of all irrational paths.
\end{definition}

\subsection{Chen simple modules}
Let us recall the construction of Chen simple modules over Leavitt path algebra in \cite{Chen} and \cite{R1}.
For every $x\in X,$ let $V_{[x],K}=\oplus_{p\in [x]}Kp$ as vector spaces. $V_{[x],K}$ is made a left 
$L_K(E)$-module by defining, for all $p\in [x], v\in E^0, e\in E^1:$

$v.p=\delta_{v,s(p)}p,$
$ e.p=\delta_{r(e),s(p)}ep,$

$e^*.p=\begin{cases}
p' \quad\text{if } p=ep'\\
0\quad \text{else}\end{cases}$

\begin{remark} 
From the construction of $V_{[x],K}$ it is easy to see that for all $\mu,\nu\in F(E),p\in [x]$ we have
\begin{equation}\label{chen_act}(\mu\nu^*).p=\begin{cases}\mu p'\quad\text{if }p=\nu p',\\
0\quad \text{else}\end{cases}\end{equation}
\end{remark}
In \cite{Chen} the twisted Chen modules are also defined. Let $\underline{a}=(a_e)_{e\in E^1}\in (K^*)^{E^1}$. 
We consider the automorphism $\sigma_{\underline{a}}$ of $L_K(E)$ given by $\sigma_{\underline{a}}(v)=v, \sigma_{\underline{a}}(e)=a_ee,\sigma_{\underline{a}}(e^*)=a_e^{-1}e^*$ for all $v\in E^0, e\in E^1.$ We have the $\underline{a}$-twisted Chen module $V_{[x],K}^{\underline{a}}$ as follows:
$V_{[x],K}^{\underline{a}}=\oplus_{p\in [x]}Kp$ as vector spaces and the action of $L_K(E)$ is given by: 
$L_K(E)\times V_{[x],K}^{\underline{a}}\rightarrow V_{[x],K}^{\underline{a}},$
$(z,p)\mapsto \sigma_{\underline{a}}(z).p,$ where $z\in L_K(E), p\in [x]$
\begin{remark}\label{rem2} $V_{[x],K}^{(1)_{e\in E^1}}= V_{[x],K}$\end{remark}
\begin{definition}
For each finite nontrivial path $q=e_1...e_n$, we denote by $a_q$ the product $a_{e_1}...a_{e_n}$. The element $\underline{a}$ is called $q$-stable if $a_q=1.$
\end{definition}
\section{Simple modules over Leavitt path algebras}

The groupoid of $E$ is 
$$G_E=\{(x,k,y)|\exists \mu,\nu\in F(E), x'\in X: x=\mu x', y=\nu x',|\mu|-|\nu|=k\}.$$
The multiplication is defined by
$$(x,k,y)(y,l,z)=(x,k+l,z).$$

The inversion is $$(x,k,y)^{-1}=(y,-k,x).$$

The unit space of $G_E$ is $G_E^{(0)}=\{(x,0,x)|x\in X\}$, which can be identified with $X$.

For each pair $(\mu,\nu)\in F(E)^2. r(\mu)=r(\nu)$ we define
$Z_{(\mu,\nu)}:=\{(\mu x,|\mu|-|\nu|,\nu x)|x\in X\}.$

These sets are compact bisections of $G_E$ and form a topological basis for $G_E$ 

The corresponding groupoid algebra is

$$A_K(G_E)=span_K\{1_{Z(\mu,\nu)}|\mu,\nu\in F(E), r(\mu)=r(\nu\}$$

The Leavitt path algebra $L_K(E)$ is isomorphic to $A_K(G_E)$ via the map 

$\pi: L_K(E)\rightarrow A_K(G_E)$
given by $\pi(v)=Z_{(v,v)}\quad \forall v\in E^0, \pi(e)=Z_{(e,r(e)},\pi(e^*)=Z_{(r(e),e)}\quad \forall e\in E^1.$

Each $L_K(E)$-module $M$ becomes an $A_K(G_E)$-module by setting $1_{Z_{(\mu,\nu)}}m=(\mu\nu^*)m,\quad \forall \mu,\nu\in F(E),\forall m\in M.$

From Theorem \ref{specmod} we know that every spectral simple module over $L_K(E)\simeq A_K(G_E)$ has the form $Ind_x(V),$ where $x\in G_E^{(0)}$ and $V$ is a simple $KG_x$-module. In the case of $L_K(E)$ the isotropy groups are clear

\begin{lemma}\label{iso}
\begin{enumerate}
\item If $x\in X\setminus E^\infty_{rat.}$, then $G_x$ is trivial.
\item If $x\sim c^\infty$, where $c$ is a simple closed path of length $n$, then $G_x\simeq n\Z.$  
\end{enumerate}
\end{lemma}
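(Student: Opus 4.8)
The plan is to exploit the very explicit description of $G_E$ so that computing each isotropy group reduces to an elementary fact about periodicity of paths. First I would record the source and range maps: from the multiplication rule $(x,k,y)(y,l,z)=(x,k+l,z)$ one reads off $s(x,k,y)=y$ and $r(x,k,y)=x$, so that $G_x=\{(x,k,x)\mid\exists\,\mu,\nu\in F(E),\,x'\in X:\ x=\mu x'=\nu x',\ |\mu|-|\nu|=k\}$. The assignment $(x,k,x)\mapsto k$ is an injective group homomorphism into $\Z$, since composition adds the middle coordinates; hence $G_x$ is isomorphic to the subgroup $S_x:=\{k\in\Z\mid (x,k,x)\in G_x\}\le\Z$, and the whole lemma reduces to identifying $S_x$. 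As $S_x$ is a subgroup it suffices to treat $k\ge 0$. For such $k$ both $\mu$ and $\nu$ are prefixes of $x$ sharing the same tail $x'$, with $|\mu|\ge|\nu|$; reading off the first $|\nu|$ edges shows $\nu$ is a prefix of $\mu$, so $\mu=\nu\rho$ with $|\rho|=k$, and cancelling $\nu$ in $\nu\rho x'=\nu x'$ yields the key identity $\rho x'=x'$.

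For part (1) I would split according to the two kinds of elements of $X\setminus E^\infty_{rat.}$. If $x$ is a finite path (so $r(x)$ is a sink), then $x'$ is a finite suffix of $x$, and $\rho x'=x'$ forces $|\rho|+|x'|=|x'|$, i.e. $|\rho|=0$ and $k=0$. If $x$ is an infinite irrational path, then $x'$ is again infinite, and iterating $\rho x'=x'$ gives $x'=\rho^\infty$; moreover the identity $\rho x'=x'$ makes $\rho$ a closed path, so by the earlier remark we may write $\rho=d^m$ with $d$ a simple closed path and obtain $x'=\rho^\infty=d^\infty$. Then $x=\nu x'$ is tail-equivalent to $d^\infty$ and hence rational, contradicting the hypothesis. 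In both cases $S_x=\{0\}$, so $G_x$ is trivial.

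For part (2) I would first reduce to the representative $x=c^\infty$: the orbit $O_x$ coincides with the tail-equivalence class $[x]$ (an arrow from $y$ to $x$ exists precisely when $x\sim y$), so since $x\sim c^\infty$ the remark after the definition of orbit gives $G_x\simeq G_{c^\infty}$. The inclusion $n\Z\subseteq S_{c^\infty}$ is immediate on taking $\mu=c^{a}$, $\nu=c^{b}$, $x'=c^\infty$, which realises $k=(a-b)n$. For the reverse inclusion I would feed a general $k\ge 0$ through the reduction above: the tail $x'$ is a suffix of $c^\infty$, hence $x'=c'^\infty$ for some cyclic rotation $c'$ of $c$, and $\rho x'=x'$ says exactly that $|\rho|$ is a period of the purely periodic infinite word $c'^\infty$. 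Because $c$ is a simple closed path it is primitive, so $n$ is the minimal period of $c'^\infty$; the gcd-of-periods property of one-sided infinite periodic sequences then forces $n\mid|\rho|=k$. Thus $S_{c^\infty}=n\Z$ and $G_x\simeq n\Z$.

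The routine bookkeeping (the source/range maps, the homomorphism to $\Z$, and the finite-path and construction steps) is straightforward; the main obstacle I anticipate is the periodicity argument in part (2). Concretely, one must prove carefully that a simple closed path is primitive and that $\rho c'^\infty=c'^\infty$ forces $|\rho|$ to be a multiple of $n$ — the point where a Fine–Wilf type statement (any two periods of an infinite periodic word admit their greatest common divisor as a period) enters, together with the observation that passing to the tail $x'$ preserves both the length $n$ and the primitivity of the underlying cycle.
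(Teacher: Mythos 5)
Your proof is correct and follows essentially the same route as the paper: reduce to $\mu=\nu\rho$ with $|\rho|=k$, derive the identity $\rho x'=x'$, and read off the answer from periodicity of $x'$. You are in fact more careful than the paper at the one nontrivial point --- the paper simply asserts $|q|=l|c|$ from $q^\infty=c^\infty$, whereas you justify this divisibility via primitivity of a simple closed path and the gcd-of-periods property, and you also treat the finite-path case of part (1) explicitly, which the paper glosses over.
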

\begin{proof}
$G_x=r^{-1}(x)\cap s^{-1}(x)=\{(x,k,x)\in G_E\}.$ If $G_x$ is nontrivial, then $\exists k\neq 0$ such that $(x,k,x)\in G_E\implies \exists \mu,\nu\in F(E), p\in X$ for which $x=\mu p=\nu p, |\mu|-|\nu|=k$. By taking the inverse element we may suppose $\mu=\nu q$ for some $q\in F(E)$. Then we have $p=qp=qq...=q^\infty=c^\infty$ for some simple closed path $c$. Let $|c|=n$. We have $|\mu|-|\nu|=|q|=l|c|=ln\implies G_x\simeq n\Z.$
\end{proof}
\begin{remark}
$L_x=r^{-1}(x)=\{(y,l,x)|\exists \mu, \nu\in F(E), p\in X: y=\mu p, x=\nu p, |\mu|-|\nu|=l\}$\end{remark}
Now we are ready to describe all spectral simple modules over Leavitt path algebras by using Theorem \ref{specmod}.
\subsection{The case of irrational paths and finite paths ending at sinks or infinite emitters}

\begin{lemma}\label{lem_well}
Let $x\in X\setminus E^\infty_{rat.}$ and $y\in [x]$. Let $\underline{a}\in (K^*)^{E_1}$. If $\mu_1,\mu_2,\nu_1,\nu_2\in F(E), p_1,p_2\in X\setminus E^\infty_{rat.}$ and $y=\mu_1p_1=\mu_2p_2, x=\nu_1p_1=\nu_2p_2$, then $a_{\mu_1}a_{\nu_1}^{-1}=a_{\mu_2}a_{\nu_2}^{-1}$ and $|\mu_1|-|\nu_1|=|\mu_2|-|\nu_2|.$
\end{lemma}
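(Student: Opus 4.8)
The plan is to reduce both assertions to a single structural fact: any two factorizations of the pair $(y,x)$ through non-rational paths differ only by one common finite path appended on the right. Concretely, I aim to produce a $\tau\in F(E)$ with $\nu_2=\nu_1\tau$ and $\mu_2=\mu_1\tau$ (after possibly interchanging the indices $1$ and $2$). Once this is in hand, both conclusions are immediate: length is additive under concatenation, so $|\mu_2|-|\nu_2|=(|\mu_1|+|\tau|)-(|\nu_1|+|\tau|)=|\mu_1|-|\nu_1|$, and the coefficient map is multiplicative, $a_{\alpha\beta}=a_\alpha a_\beta$, whence $a_{\mu_2}a_{\nu_2}^{-1}=a_{\mu_1}a_\tau a_\tau^{-1}a_{\nu_1}^{-1}=a_{\mu_1}a_{\nu_1}^{-1}$ (reading $a_\tau=1$ when $\tau$ is trivial).

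To construct $\tau$, I would first exploit the path equality $x=\nu_1 p_1=\nu_2 p_2$. Assume without loss of generality that $|\nu_1|\le|\nu_2|$. Since $\nu_2$ is a prefix of $x$, reading off the first $|\nu_2|$ edges of $x$ shows that $\nu_1$ is an initial segment of $\nu_2$, so $\nu_2=\nu_1\tau$ for a unique $\tau\in F(E)$. Left-cancelling the finite prefix $\nu_1$ from $\nu_1 p_1=\nu_1\tau p_2$ gives $p_1=\tau p_2$. Substituting this into $y=\mu_1 p_1=\mu_2 p_2$ produces the key identity $\mu_1\tau p_2=\mu_2 p_2$.

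The hard part will be passing from $\mu_1\tau p_2=\mu_2 p_2$ to $\mu_1\tau=\mu_2$, since one cannot cancel an infinite path on the right in general; this is exactly where the hypothesis $p_2\in X\setminus E^\infty_{rat.}$ is indispensable. Comparing lengths, if $|\mu_1\tau|\neq|\mu_2|$ then one of $\mu_1\tau,\mu_2$ is a proper initial segment of the other, say $\mu_2=\mu_1\tau\rho$ with $\rho$ nontrivial; cancelling the common prefix yields $p_2=\rho p_2$. For this concatenation to be legal $\rho$ must be a closed path, and iterating forces $p_2=\rho^\infty=c^\infty$ for the simple closed path $c$ underlying $\rho$, so $p_2$ is rational, contradicting the hypothesis (and if $p_2$ is a finite path the length count is already absurd). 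Hence $|\mu_1\tau|=|\mu_2|$, and as both are prefixes of $y$ of equal length they coincide, giving $\mu_2=\mu_1\tau$ and completing the construction of $\tau$. I expect this cancellation to be the only delicate point; everything else is bookkeeping with lengths and the multiplicative coefficients $a_q$.
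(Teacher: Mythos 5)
Your proposal is correct and follows essentially the same route as the paper: the paper's proof likewise reduces to a single common finite path (writing $\mu_1=\mu_2q$, deducing $p_2=qp_1$ and then $\nu_1=\nu_2q$), with the alternative case $p_1=q p_1$ excluded because it would force $p_1=q^\infty$ to be rational. Your version merely starts the comparison from the $\nu$'s instead of the $\mu$'s and spells out the cancellation step in slightly more detail; the substance is identical.
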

\begin{proof}
We may suppose $\mu_1=\mu_2q, q\in F(E)$. Then we have \begin{equation}\label{eq1}\mu_2qp_1=\mu_2qp_2\Rightarrow p_2=qp_1\Rightarrow \nu_1p_1=\nu_2qp_1.
\end{equation}
 From \eqref{eq1} we see that there are 2 possibilities:
 \begin{enumerate}
 \item

 $\nu_1=\nu_2q\implies a_{\mu_1}a_{\nu_1}^{-1}=a_{\mu_2q}a_{\nu_2q}^{-1}
 =a_{\mu_2}a_{q}a_{\nu_2}^{-1}a_q^{-1}=a_{\mu_2}a_{\nu_2}^{-1}.$ 
On the other hand, $|\mu_1|-|\mu_2|=|\nu_1|-|\nu_2|=|q|.$

\item $p_1=qp_1=qq...=q^{\infty}, $ where $q$ is a closed path. But $p_1$ is not rational, so this case is impossible.
 \end{enumerate}\end{proof} 
  If $x\in X \setminus E^{\infty}_{rat.}$, by Lemma \ref{iso} $G_x$ is trivial: $G_x=\{(x,0,x)\}\implies KG_x\simeq K.$ 
The unique simple $K$-module is $K$, on which $K$ acts by left multiplication. .

\begin{proposition}\label{triv}
If $x\in X \setminus E^{\infty}_{rat.}$,  then for all $\underline{a}\in (K^*)^{E^1}$ we have
$Ind_x(K)\simeq V^{\underline{a}}_{[x],K} $ as $A_K(G_E)$-modules.
\end{proposition}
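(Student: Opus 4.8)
The plan is to construct an explicit $A_K(G_E)$-module isomorphism
$$\Phi:\mathrm{Ind}_x(K)=KL_x\otimes_{KG_x}K\longrightarrow V^{\underline a}_{[x],K}.$$
Since $G_x$ is trivial by Lemma~\ref{iso}, $KG_x\simeq K$ and the tensor product degenerates: $\mathrm{Ind}_x(K)\simeq KL_x\otimes_K K\simeq KL_x$ as vector spaces, with basis the elements $t\otimes 1$ for $t\in L_x$. Recall $L_x=r^{-1}(x)$ consists of triples $(y,l,x)$ with $y=\mu p$, $x=\nu p$, $|\mu|-|\nu|=l$, and such a $y$ ranges exactly over the tail-equivalence class $[x]$. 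So the natural candidate is to send a basis element $(y,l,x)\otimes 1$ to a scalar multiple of $y\in V^{\underline a}_{[x],K}$, the scalar being built from $\underline a$ so as to intertwine the twisted action.

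First I would pin down the map on basis elements. Given $t=(y,l,x)\in L_x$ with $y=\mu p$ and $x=\nu p$, I would set
$$\Phi\big((\mu p,\,|\mu|-|\nu|,\,\nu p)\otimes 1\big)=a_{\mu}a_{\nu}^{-1}\,y.$$
The very first thing to check is that this is \emph{well defined}: the representation of $t$ as $(\mu p,l,\nu p)$ is not unique, but Lemma~\ref{lem_well} says precisely that $a_\mu a_\nu^{-1}$ (and $|\mu|-|\nu|$) is independent of the chosen factorization, so the scalar $a_\mu a_\nu^{-1}$ is genuinely a function of $t$ alone. This is where the hypothesis $x\in X\setminus E^\infty_{\mathrm{rat.}}$ is essential, since Lemma~\ref{lem_well} relies on $p_1$ being non-rational to rule out the second case in its proof. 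Because $\Phi$ sends basis to basis bijectively (the assignment $t\mapsto y=s$-of-the-range-side is a bijection $L_x\to[x]$, with the scalar nonzero as each $a_e\in K^*$), $\Phi$ is a linear isomorphism.

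The main work is verifying that $\Phi$ is $A_K(G_E)$-linear, i.e. that it intertwines the two module structures on generators $1_{Z_{(\alpha,\beta)}}$ (equivalently the images $\pi(v),\pi(e),\pi(e^*)$ of the Leavitt generators). On the induced side the action is computed via Proposition~\ref{char_act}: $1_{Z_{(\alpha,\beta)}}\cdot t=Z_{(\alpha,\beta)}t$ when $s(t)\in Z_{(\alpha,\beta)}^{-1}Z_{(\alpha,\beta)}$ and $0$ otherwise, and composing the groupoid elements multiplies the length-coordinates so that $Z_{(\alpha,\beta)}(\mu p,l,\nu p)$ prepends/cancels $\alpha,\beta$ in the standard Leavitt fashion. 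On the target side the action is the twisted Chen action $z\cdot y=\sigma_{\underline a}(z).y$, and by the remark displaying \eqref{chen_act} we have $(\mu\nu^*).y$ described by the same prepend/cancel rule, now carrying the twisting factor $a_\mu a_\nu^{-1}$. So the verification reduces to checking that the scalars match: when $1_{Z_{(\alpha,\beta)}}$ acts nontrivially on $(\mu p,l,\nu p)\otimes 1$, reindexing the result produces exactly the scalar $a_\alpha a_\beta^{-1}$ relative to the old one, which is precisely the factor $\sigma_{\underline a}$ inserts on the Chen side. The expected obstacle is purely bookkeeping: carefully matching the support condition $s(t)\in Z_{(\alpha,\beta)}^{-1}Z_{(\alpha,\beta)}$ on the groupoid side with the case split in \eqref{chen_act} on the Chen side, and confirming the $\underline a$-exponents agree in each case. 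It suffices to check this on the algebra generators $\pi(v),\pi(e),\pi(e^*)$, which keeps the casework finite and finitely many scalars to track, and Lemma~\ref{lem_well} again guarantees consistency of the scalar under the ambiguity of factorizations.
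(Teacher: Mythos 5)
Your proposal is correct and is essentially the paper's own proof: the paper defines the identical map $\varphi((y,l,x))=a_\mu a_\nu^{-1}y$, invokes Lemma~\ref{lem_well} for well-definedness, and verifies the intertwining against the generators $1_{Z_{(\alpha,\beta)}}$. The only cosmetic differences are that the paper exhibits an explicit inverse $\psi$ rather than arguing basis-to-basis, and propagates the module-map check from the base point $(x,0,x)$ using products of compact bisections rather than checking each basis element directly; both are routine variants of the same argument.
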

\begin{proof} 
$Ind_x(K)=KL_x\otimes_K K\simeq KL_x$ as $A_K(G_E)$-modules via the map $(y,l,x)\otimes k\mapsto k(y,l,x).$
 
 Consider the $K$-linear map $\varphi: KL_x\rightarrow V^{\underline{a}}_{[x],K}$ given by
 $\varphi((x,0,x))=x, \varphi ((y,l,x))=a_{\mu}a_{\nu}^{-1}y,$ where $\mu,\nu\in F(E)$ s.t. $x=\nu p, y=\mu p, |\mu|-|\nu|=l.$

 The $K$-linear map $\psi:V^{\underline{a}}_{[x],K}\rightarrow KL_x $ is given by $\psi(y)=a_{\mu}^{-1}a_{\nu}(y,l,x),$ where $\mu,\nu\in F(E) \quad \text{s.t.} y=\mu p,x=\nu p,|\mu|-|\nu|=l.$
  By Lemma \ref{lem_well} $\varphi$ and $\psi$  are well-defined. 
  
We have 
\begin{equation}\label{eq3}\varphi\circ \psi (y)=\varphi(a_{\mu}^{-1}a_{\nu}(y,l,x))=a_{\mu}^{-1}a_{\nu}a_{\mu}a_{\nu}^{-1}y=y.  
 \end{equation}  
\begin{equation}
\label{eq4}
\psi(\varphi((y,l,x)))=\psi (a_{\mu}a_{\nu}^{-1}y)=a_{\mu}a_{\nu}^{-1}a_{\mu}^{-1}a_{\nu}(y,l,x)=(y,l,x).
\end{equation}   
   From \eqref{eq3} and \eqref{eq4} one deduces that $\varphi$ is bijective.
 
 Claim: For every pair $(\mu,\nu)\in (F(E))^2, r(\mu)=r(\nu)$ we have $\varphi(1_{Z_{(\mu,\nu)}}(x,0,x))=1_{Z_{(\mu,\nu)}} \varphi((x,0,x))$ 
 
\emph{Proof of the claim:} By Proposition \ref{char_act} we have 
\begin{equation}\label{eq5}
\varphi(1_{Z_{(\mu,\nu)}}(x,0,x))=\begin{cases}\varphi((\mu p,|\mu|-|\nu|,x))=a_{\mu}a_{\nu}^{-1}\mu p\quad \text{if } x=\nu p\\ 0\quad \text{else}.\end{cases}
\end{equation}

On the other hand, \begin{multline}\label{eq6}
1_{Z_{(\mu,\nu)}} \varphi((x,0,x))=1_{Z_{(\mu,\nu})} x=\sigma_{\underline{a}}(\mu\nu^*).x=a
_{\mu}a_{\nu}^{-1}\mu\nu^*.x=\begin{cases}a
_{\mu}a_{\nu}^{-1}\mu p \quad \text{if }x=\nu p\\0\quad\text{else.}\end{cases}
\end{multline}
From \eqref{eq5} and \eqref{eq6} the claim is proved.

Let $\alpha,\beta\in F(E)$. We have
\begin{multline}\label{eq7}
\varphi(1_{Z_{(\alpha,\beta)}}(y,l,x))=\varphi (1_{Z_{(\alpha,\beta)}}1_{Z_{(\mu,\nu)}}((x,0,x)))=\varphi(1_{Z_{(\alpha,\beta)}Z_{(\mu,\nu)}}(x,0,x))=\\=1_{Z_{(\alpha,\beta)}Z_{(\mu,\nu)}}\varphi((x,0,x))=1_{Z_{(\alpha,\beta)}}1_{Z_{(\mu,\nu)}}\varphi((x,0,x))=1_{Z_{(\alpha,\beta)}}(y,l,x),
\end{multline} since $Z_{(\alpha,\beta)}Z_{(\mu,\nu)}$ is also a compact bisection of $G_E.$
From \eqref{eq7} one deduces that $\varphi$ is an  $A_K(G_E)$-homomorphism. Hence it is an $A_K(G_E)$-isomorphism.
\end{proof}
From Proposition \ref{triv} and Proposition \ref{noniso} we obtain results as in \cite{Chen}:
\begin{corollary}\label{cor}\begin{enumerate}

\item For $q,q'\in X \setminus E^\infty_{rat.}$  $V_{[q],K}\simeq V_{[q'],K}$ if and only if $[q]=[q'].$
\item If $x\in X \setminus E^{\infty}_{rat.}$, then $V^{\underline{a}}_{[x],K}\simeq V^{\underline{b}}_{[x],K}$, for all $\underline{a},\underline{b}\in (K^*)^{E^1}.$

\end{enumerate}
\end{corollary}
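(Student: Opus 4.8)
The plan is to deduce both statements directly from Proposition~\ref{triv}, whose essential content is that the isomorphism class of $Ind_x(K)$ does not depend on the twist: for \emph{every} $\underline{a}\in(K^*)^{E^1}$ one has $Ind_x(K)\simeq V^{\underline{a}}_{[x],K}$ as $A_K(G_E)$-modules. The only other ingredient I would need is the observation that the groupoid orbit of a unit $x\in G_E^{(0)}$ coincides with its tail-equivalence class $[x]$. This is immediate from the definitions: an element $\alpha$ with $s(\alpha)=y$, $r(\alpha)=x$ is of the form $(x,k,y)$, and $(x,k,y)\in G_E$ means precisely that there exist $\mu,\nu\in F(E)$ and $x'\in X$ with $x=\mu x'$, $y=\nu x'$, which is exactly the relation $y\sim x$. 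Hence $O_x=[x]$, and ``distinct orbits'' in Lemma~\ref{noniso} translates into ``distinct tail-equivalence classes''.

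I would dispatch statement (2) first, as the cleaner consequence. Fixing $x\in X\setminus E^\infty_{rat.}$ and applying Proposition~\ref{triv} to both twists gives $V^{\underline{a}}_{[x],K}\simeq Ind_x(K)\simeq V^{\underline{b}}_{[x],K}$; transitivity of $\simeq$ then shows that all twisted Chen modules attached to one fixed class are mutually isomorphic. No further argument is required.

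For statement (1), the backward implication is a tautology: if $[q]=[q']$ then $V_{[q],K}$ and $V_{[q'],K}$ are built from the identical index set and the identical $L_K(E)$-action, so they are literally equal. For the forward implication I would first invoke Remark~\ref{rem2} to read $V_{[q],K}$ as the untwisted Chen module $V^{\underline{a}}_{[q],K}$ with $\underline{a}=(1)_{e\in E^1}$, whence Proposition~\ref{triv} yields $V_{[q],K}\simeq Ind_q(K)$ and likewise $V_{[q'],K}\simeq Ind_{q'}(K)$. An isomorphism $V_{[q],K}\simeq V_{[q'],K}$ therefore forces $Ind_q(K)\simeq Ind_{q'}(K)$, and by Lemma~\ref{iso} the isotropy groups $G_q$ and $G_{q'}$ are trivial, so $K$ is the unique simple module in each case. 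The contrapositive of Lemma~\ref{noniso} then says that $q$ and $q'$ cannot have distinct orbits, i.e. $[q]=[q']$ by the orbit/tail-equivalence dictionary established above.

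The step I expect to require the most care is not a computation but the bookkeeping at the end of (1): one must verify that the hypotheses of Lemma~\ref{noniso} are genuinely satisfied, namely that both modules are induced from the \emph{same} simple module $K$ over trivial isotropy groups, so that the lemma applies verbatim and its conclusion about distinct orbits is available. Once this is in place, everything else reduces to transitivity of isomorphism and the identification $O_x=[x]$.
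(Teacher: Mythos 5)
Your proposal is correct and follows exactly the route the paper intends: the paper gives no written proof beyond the citation ``From Proposition~\ref{triv} and Proposition~\ref{noniso}'', and your argument simply fleshes out that citation, including the (needed, and correctly justified) identification of groupoid orbits with tail-equivalence classes and the check that Lemma~\ref{noniso} applies with the same simple module $K$ on both sides.
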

\subsection{The case of rational paths}
Let $x$ be a rational path. Following Remark \ref{isom} we may suppose $x=c^\infty,$ where $c=e_1...e_n$ is a simple closed path. For each $i\in\{1,2,...,n\}$ we define $c_i=e_{i+1}...e_{n}e_1...e_i$ (the $i$-th rotation of $c$).

By Lemma \ref{iso} $G_{\infty}=\{(c^{\infty},ln,c^{\infty})|l\in \Z\}\simeq n\Z\simeq \Z$, hence $KG_{c^\infty}\simeq K\Z\simeq K[t,t^{-1}]$. Each simple $K[t,t^{-1}]$-module is isomorphic to $K[t,t^{-1}]/I$, where $I$ is some maximal ideal of $K[t,t^{-1}]$. Since $K[t,t^{-1}]$ is a principal ideal domain, we have $I=(f(t))$(the ideal of $K[t,t^{-1}]$ generated by an irreducible polynomial $f(t)\in K[t], f\neq t$). Therefore
the spectral simple module in this case is 

$Ind_{c^\infty}(K[t,t^{-1}]/(f(t)))=KL_{c^{\infty}}\otimes_{K[t, t^{-1}]}(K[t,t^{-1}]/(f(t))).$
 
 At first we give a general result and then use it to study $Ind_{c^\infty}(K[t,t^{-1}]/(f(t))).$

\begin{lemma}\label{twist_iso}
Let $K'\supseteq K$ be any field extension.
Let $a\in K^{'*}$. We denote by $K'^{(a)}$ the following $KG_{c^\infty}$-module: $K'^{(a)}=K'$ as vector spaces and the action is $(x,ln,x)k'=a^lk', \quad \forall k'\in K', l\in \Z.$

 Let $\underline{a}\in (K'^*)^{E_1}, a_c=a_{e_1}...a_{e_n}=a,\quad V^{\underline{a}}_{[c^{\infty}],K'}$ be the $\underline{a}$-twisted Chen module over $L_{K'}(E)$. We denote by $V^{\underline{a}}_{[c^{\infty}],K'}|_K$ the restriction of $V^{\underline{a}}_{[c^{\infty}],K'}$ from $L_{K'}(E)$ to $L_K(E).$ Then we have

$Ind_{c^\infty}(K'^{(a)})=KL_{c^{\infty}}\otimes_{K[t, t^{-1}]}K'^{(a)}\simeq V^{\underline{a}}_{[c^{\infty}],K'}|_K$ as $A_K(G_E)$-modules.
\end{lemma}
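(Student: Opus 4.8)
The plan is to mirror the proof of Proposition \ref{triv}, constructing an explicit $K$-linear map $\varphi\colon Ind_{c^\infty}(K'^{(a)})\to V^{\underline a}_{[c^\infty],K'}|_K$ together with an explicit inverse, and then checking that it intertwines the $A_K(G_E)$-actions. Concretely I would set
\[
\varphi\big((y,l,c^\infty)\otimes k'\big)=a_\mu a_\nu^{-1}\,k'\,y,
\]
where $\mu,\nu\in F(E)$ and $p\in X$ are chosen so that $y=\mu p$, $c^\infty=\nu p$ and $|\mu|-|\nu|=l$, with candidate inverse $\psi(k'y)=(y,l,c^\infty)\otimes a_\mu^{-1}a_\nu k'$. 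These are the same formulas as in Proposition \ref{triv}, now carrying the extra scalar $k'\in K'$ through the tensor factor.

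The first and main point to settle is well-definedness of $\varphi$ on the tensor product, which splits into two checks. First, for a \emph{fixed} element $(y,l,c^\infty)\in L_{c^\infty}$ the scalar $a_\mu a_\nu^{-1}$ must not depend on the chosen representation $(\mu,\nu,p)$; this is the analogue of Lemma \ref{lem_well}, except that the excluded periodic alternative (case (2) there) now genuinely occurs. Running the same reduction $\mu_1=\mu_2 q$, in the periodic branch one is forced to have $q=c_i^{m}$, and the length relation $|\mu_1|-|\nu_1|=|\mu_2|-|\nu_2|$ then forces $\nu_1=\nu_2 c_i^{m}$, so the two factors $a_{c_i}^{m}$ cancel and $a_{\mu_1}a_{\nu_1}^{-1}=a_{\mu_2}a_{\nu_2}^{-1}$ again. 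Second, $\varphi$ must be $KG_{c^\infty}$-balanced: since $(y,l,c^\infty)\,t=(y,l+n,c^\infty)$ while $t$ acts on $K'^{(a)}$ as multiplication by $a$, I must verify $a_{\mu c_i}a_\nu^{-1}=a\cdot a_\mu a_\nu^{-1}$. Both checks reduce to the single identity $a_{c_i}=a_{e_1}\cdots a_{e_n}=a_c=a$, valid for every rotation because $K'^*$ is commutative. This rotation-invariance of the cycle weight, matched against the defining action of $t$ on $K'^{(a)}$, is the technical heart of the lemma and the step I expect to require the most care.

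Granting well-definedness, bijectivity follows formally: a direct computation gives $\varphi\circ\psi=\mathrm{id}$ and $\psi\circ\varphi=\mathrm{id}$ exactly as in \eqref{eq3}--\eqref{eq4} (the scalars $a_\mu a_\nu^{-1}$ and $a_\mu^{-1}a_\nu$ cancel), once one checks that $\psi$ itself is well-defined, which uses the same balancing identity $a_{c_i}=a$. Alternatively one may note that $G_{c^\infty}$ acts freely on $L_{c^\infty}$, so $KL_{c^\infty}$ is free over $KG_{c^\infty}\simeq K[t,t^{-1}]$ and the two sides have equal $K$-dimension $|[c^\infty]|\cdot\dim_K K'$.

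It remains to show $\varphi$ is $A_K(G_E)$-linear. As in Proposition \ref{triv} I would first establish the claim on distinguished generators: taking $\mu,\nu$ trivial gives $\varphi\big((c^\infty,0,c^\infty)\otimes k'\big)=k'c^\infty$, and I then compute both $\varphi\big(1_{Z_{(\mu,\nu)}}\big((c^\infty,0,c^\infty)\otimes k'\big)\big)$ via Proposition \ref{char_act} and $1_{Z_{(\mu,\nu)}}\varphi\big((c^\infty,0,c^\infty)\otimes k'\big)$ via the twisted Chen action $1_{Z_{(\mu,\nu)}}=\sigma_{\underline a}(\mu\nu^*)$; both equal $a_\mu a_\nu^{-1}k'\mu p$ when $c^\infty=\nu p$ and vanish otherwise. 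Since every $(y,l,c^\infty)\in L_{c^\infty}$ has the form $1_{Z_{(\mu,\nu)}}(c^\infty,0,c^\infty)$ and $A_K(G_E)$ acts only through the left tensor factor, the elements $(c^\infty,0,c^\infty)\otimes k'$ (for $k'$ ranging over a $K$-basis of $K'$) generate $Ind_{c^\infty}(K'^{(a)})$ over $A_K(G_E)$; the general intertwining identity then follows by the associativity computation of \eqref{eq7}, using that $Z_{(\alpha,\beta)}Z_{(\mu,\nu)}$ is again a compact bisection.
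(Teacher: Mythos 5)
Your proposal is correct and follows essentially the same route as the paper: define the bilinear map $\phi((y,m,c^\infty),k')=a_\mu a_\nu^{-1}k'y$, check well-definedness and $KG_{c^\infty}$-balancedness via the rotation invariance $a_{c_i}=a_c=a$, obtain $\varphi$ from the universal property, invert it with $\psi$, and verify the intertwining on the generators $(c^\infty,0,c^\infty)\otimes k'$ before extending by the bisection product argument of \eqref{eq7}. The only step you compress relative to the paper is the well-definedness of $\psi$, where the exponent $l=|\mu|-|\nu|$ itself changes by multiples of $n$ across representations and one must move a group element $(c^\infty,ln,c^\infty)$ across the tensor sign; you correctly identify that this rests on the same identity $a_{c_i}=a$, which is what the paper's longest computation carries out in detail.
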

\begin{proof}
Consider the $K$-bilinear map $\phi: KL_{c^{\infty}}\times K'^{(a)}\rightarrow V^{\underline{a}}_{[c^{\infty}],K'|_K}$ defined by 
$\phi((y,m,c^{\infty}),k')=a_\mu a_\nu^{-1}k'y$, where $\mu,\nu\in F(E)$ such that $y=\mu p, c^{\infty}=\nu p, |\mu|-|\nu|=m.$

We show that 
\begin{enumerate}

\item $\phi$ is well-defined. 
In fact, let $\mu_1,\mu_2,\nu_1,\nu_2\in F(E), p_1,p_2\in E^{\infty}_{rat.}$ such that  $y=\mu_1p_1=\mu_2p_2,c^{\infty}=\nu_1p_1=\nu_2p_2,|\mu_1|-|\nu_1|=|\mu_2|-|\nu_2|=m.$ 

Suppose $\mu_1=\mu_2q\implies p_2=qp_1\implies \nu_1p_1=\nu_2qp_1.$

Moreover,  $|\mu_1|-|\nu_1|=|\mu_2|-|\nu_2|\implies |\nu_1|-|\nu_2|=|q|\implies \nu_1=\nu_2q.$

Therefore, $a_{\mu_1}a_{\nu_1}^{-1}=a_{\mu_2q}a_{\nu_2q}^{-1}=a_{\mu_2}a_qa_{\nu_2}a_q^{-1}=a_{\mu_2}a_{\nu_2}^{-1}.$

\item $\phi$ is a $KG_{c^{\infty}}$-balanced product.

In fact, Let $y=\mu p, c^\infty=\nu p,|\mu|-|\nu|=m,$ it is easy to see that $p=c_{i}^\infty,$ for some $i\in\{1,2,...,n\}.$  If $l\geq 0$, we have
 \begin{equation}
\label{eq9}
\phi((y,m,c^\infty)(c^\infty,ln,c^\infty),k')=\phi((y, m+ln,c^{\infty}),k')=\phi((\mu c_i^lc_i^\infty,m+ln,\nu c_i^\infty),k')=a_{\mu c_i^l}a_{\nu}^{-1}k'y=a_{\mu}a^la_{\nu}^{-1}k'y.
\end{equation} 
since $a_{c_i}=a_{e_{i+1}}...a_{e_n}a_{e_1}...a_{e_i}=a_c=a.$
On the other hand,
\begin{equation}\label{eq10}
\phi((y,m,c^\infty),(c^\infty,ln,c^\infty)k')=\phi((y,m,c^\infty),a^l k')=a_{\mu}a_{\nu}^{-1}a^lk'y.
\end{equation}
From \eqref{eq9} and \eqref{eq10} we see that $\phi$ is a $KG_{c^\infty}$-balanced product.
\end{enumerate}
By the universal property of the tensor product, there exists the unique $K$-linear map $\varphi: KL_{c^\infty}\otimes_{KG_{c^\infty}} K'^{(a)}\rightarrow V^{\underline{a}}_{[c^{\infty}],K'|_K}$ such that

$\varphi((y,m,c^\infty)\otimes k')=\phi((y,m,c^\infty),k')=a_{\mu}a_{\nu}^{-1}k'y,$ where $y=\mu c_i^\infty, c^\infty=\nu c_{i}^\infty, |\mu|-|\nu|=m.$

Consider the $K$-linear map $\psi: V^{\underline{a}}_{[c^{\infty}],K'}|_K\rightarrow KL_{c^\infty}\otimes_{KG_{c^\infty}} K'^{(a)}$ defined by

$\psi(y)=(y,|\mu|-|\nu|,c^\infty)\otimes a_{\mu}^{-1}a_{\nu}.$

We show that $\psi$ is well-defined. In fact, if $y=\mu_1c_{i_1}^\infty=\mu_2c_{i_2}^\infty, c^\infty=\nu_1c_{i_1}^\infty=\nu_2 c_{i_2}^\infty, |\mu_1|\geq |\mu_2|$. Suppose $\mu_1=\mu_2q$. We have $c_{i_2}^\infty=qc_{i_1}^\infty\implies q=e_{i_2+1}...e_{i_n}e_1...e_{i_1}c_{i_1}^{l'}.$ 
Let $\nu_1=c^{l_1}e_1...e_{i_1}, \nu_2=c^{l_2}e_1...e_{i_2}.$

We have $|\mu_1|-|\mu_2|+|\nu_2|-|\nu_1|=n-i_2+i_1+l'n+(l_2-l_1)n+i_2-i_1=(l'+1+l_2-l_1)n.$ 

Therefore
\begin{multline}
(y,|\mu_1|-|\nu_1|,c^\infty)\otimes a_{\mu_1}^{-1}a_{\nu_1}=\\=(y,|\mu_2|-|\nu_2|,c^{\infty})(c^{\infty},|\mu_1|-|\nu_1|-|\mu_2|+|\nu_2|, c^{\infty})\otimes a_{\mu_2}^{-1}a_{e_{i_2+1}}^{-1}...a_{e_{i_n}}^{-1}a_{e_1}^{-1}...a_{e_{i_1}}^{-1}a^{-l'}a^{l_1}a_{e_1}...a_{e_{i_1}}=\\=(y,|\mu_2|-|\nu_2|, c^\infty)\otimes a^{l'+1+l_2-l_1}a_{\mu_2}^{-1}a_{e_{i_2+1}}^{-1}...a_{e_n}^{-1}a^{l_1-l'}=(y,|\mu_2|-|\nu_2|, c^\infty)\otimes a_{\mu_2}^{-1}a_{e_{i_2+1}}^{-1}...a_{e_n}^{-1}a^{l_2+1}=\\=(y,|\mu_2|-|\nu_2|, c^\infty)\otimes a_{\mu_2}^{-1}a^{l_2}a_{e_{i_2+1}}^{-1}...a_{e_n} ^{-1}a_{e_1}a_{e_2}...a_{e_n}=(y,|\mu_2|-|\nu_2|, c^\infty)\otimes a_{\mu_2}^{-1}a^{l_2}a_{e_1}...a_{e_{i_2}}=\\=(y,|\mu_2|-|\nu_2|, c^\infty)\otimes a_{\mu_2}^{-1}a_{\nu_2}.
\end{multline}
We have
\begin{equation}\label{eq11}\varphi\circ\psi(y)=\varphi((y,|\mu|-|\nu|,c^{\infty}\otimes a_{\mu}^{-1}a_{\nu})=a_{\mu}a_{\nu}^{-1}a_{\mu}^{-1}a_{\nu}y=y.
\end{equation}
\begin{equation}\label{eq12}
\psi\circ \varphi((y,m,c^\infty)\otimes k')=\psi(a_{\mu}a_{\nu}^{-1}k'y)=(y,m,c^\infty)\otimes a_{\mu}^{-1}a_\nu a_{\mu}a_{\nu}^{-1}k'=(y,m,c^\infty)\otimes k'.
\end{equation}
From \eqref{eq11} and \eqref{eq12} one deduces that $\varphi$ is bijective.

Claim:For all $ \mu,\nu\in F(E),k'\in K'$ we have
\begin{equation}\label{eq13}\varphi(1_{Z_{(\mu,\nu)}}((c^\infty,0,c^\infty)\otimes k'))=1_{Z_{(\mu,\nu)}}\varphi ((c^\infty,0,c^\infty)\otimes k') \end{equation}
\emph{Proof of the claim:} The left hand side is 
$\varphi((\mu c_i^\infty,|\mu|-|\nu|,c^\infty)\otimes k')=a_{\mu}a_{\nu}^{-1}k'\mu c_i^\infty.$

The right hand side is $1_{Z_{(\mu,\nu)}}k'c^\infty=\sigma_{\underline{a}}(\mu\nu^*).(k'c^\infty)=a_{\mu}a_{\nu}^{-1}k'\mu c_i^\infty.$

Hence by using \eqref{eq12} similarly as in the last part of the proof of Proposition \ref{triv} one deduces that $\varphi$ is an $A_K(G_E)$-isomorphism. 
\end{proof}

When $K'=K$, from Lemma \ref{twist_iso} we have immediately
\begin{corollary}\label{cor1}

$Ind_{c^\infty}(K^{(a)})\simeq V^{\underline{a}}_{[c^{\infty}],K}$

\end{corollary}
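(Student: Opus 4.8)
The plan is to obtain this corollary as a direct specialization of Lemma \ref{twist_iso}, taking the field extension to be trivial, i.e.\ $K'=K$. No new construction is needed; the point is simply to observe that both sides of the isomorphism in Lemma \ref{twist_iso} collapse to the objects appearing in the corollary once $K'$ coincides with $K$.

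First I would fix $\underline{a}\in (K^*)^{E^1}$ and set $a:=a_c=a_{e_1}\cdots a_{e_n}\in K^*$, so that the hypothesis $a_c=a$ of Lemma \ref{twist_iso} is satisfied by construction. With $K'=K$, the module $K'^{(a)}$ is exactly the $KG_{c^\infty}$-module $K^{(a)}$ of the corollary, and the left-hand side $KL_{c^{\infty}}\otimes_{K[t,t^{-1}]}K^{(a)}$ is precisely $Ind_{c^\infty}(K^{(a)})$ by the definition of the induced module.

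Second I would note that the restriction operation becomes vacuous: the symbol $V^{\underline{a}}_{[c^{\infty}],K'}|_K$ denotes the restriction of the $\underline{a}$-twisted Chen module along the inclusion $L_K(E)\hookrightarrow L_{K'}(E)$, and when $K'=K$ this inclusion is the identity map, so $V^{\underline{a}}_{[c^{\infty}],K}|_K=V^{\underline{a}}_{[c^{\infty}],K}$. Substituting these two identifications into the conclusion of Lemma \ref{twist_iso} yields the $A_K(G_E)$-isomorphism $Ind_{c^\infty}(K^{(a)})\simeq V^{\underline{a}}_{[c^{\infty}],K}$ at once. I do not expect any genuine obstacle here: the entire content of the corollary is already contained in the lemma, and the only thing to verify---that the compatibility $a_c=a$ holds---is immediate from the way $a$ is defined in terms of $\underline{a}$.
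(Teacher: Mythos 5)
Your proposal is correct and matches the paper exactly: the paper also obtains this corollary immediately from Lemma \ref{twist_iso} by setting $K'=K$, with the restriction $V^{\underline{a}}_{[c^{\infty}],K}|_K$ becoming the module itself.
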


From Lemma \ref{cor} and Proposition \ref{res_in} one can obtain the following results as in \cite{Chen}.
\begin{corollary}\label{cor2}

$V^{\underline{a}}_{[c^{\infty}],K}\simeq V^{\underline{b}}_{[c^{\infty}],K}$ if and only if $a_c=b_c$ (i.e. $ab^{-1}$ is $c$-stable.)

\end{corollary}

When $K'=K[t,t^{-1}]/(f(t)),a=\bar{t}, f$ is an irreducible polynomial in $K[t], f\neq t$, since  
 $K'=K[t,t^{-1}]/(f(t))$ is a simple $K[t,t^{-1}]$-module, from Lemma \ref{twist_iso} and Theorem \ref{simp} we have immediately the following result as in \cite{AKR}
\begin{corollary}\label{cor3} $V^{f}_{[c^\infty],K}\simeq Ind_{c^\infty}(K[t,t^{-1}]/(f(t)))$ as simple $A_K(G_E)$-modules, where $V^{f}_{[c^\infty],K}=V^{\bar{t}}_{[c^\infty],K'}|_K.$\end{corollary}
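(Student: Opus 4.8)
The plan is to obtain Corollary \ref{cor3} as a direct specialization of Lemma \ref{twist_iso}, taking the field extension to be $K'=K[t,t^{-1}]/(f(t))$ and the scalar to be $a=\bar t$, and then to read off simplicity from Theorem \ref{simp}. First I would record the standing algebraic facts: since $f$ is irreducible with $f\neq t$, the ideal $(f(t))$ is maximal in the principal ideal domain $K[t,t^{-1}]$, so $K'=K[t,t^{-1}]/(f(t))$ is a field containing $K$; moreover $\bar t$ is a unit of $K'$ because $t$ is a unit of $K[t,t^{-1}]$. This supplies exactly the data $K'\supseteq K$ and $a=\bar t\in K'^*$ required to invoke Lemma \ref{twist_iso}.

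The single point deserving care is the identification of the $KG_{c^\infty}$-module $K'^{(a)}$ of Lemma \ref{twist_iso} with the simple module $K[t,t^{-1}]/(f(t))$. Under the isomorphism $KG_{c^\infty}\simeq K[t,t^{-1}]$ fixed above, the generator $(c^\infty,n,c^\infty)$ corresponds to $t$. By definition the action on $K'^{(a)}$ sends $(c^\infty,ln,c^\infty)$ to multiplication by $a^l=\bar t^{\,l}$, which is precisely multiplication by $t^l$ on the quotient $K[t,t^{-1}]/(f(t))$. Hence $K'^{(a)}$ and $K[t,t^{-1}]/(f(t))$ agree as $KG_{c^\infty}$-modules, and so $Ind_{c^\infty}(K'^{(a)})=Ind_{c^\infty}(K[t,t^{-1}]/(f(t)))$.

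I would then fix a twist $\underline a\in (K'^*)^{E^1}$ realizing $a$, for instance $a_{e_1}=\bar t$ and $a_e=1$ for every other edge $e$, so that $a_c=a_{e_1}\cdots a_{e_n}=\bar t=a$; by Corollary \ref{cor2} read over the base field $K'$ the isomorphism type of $V^{\underline a}_{[c^\infty],K'}$ depends only on $a_c$, which matches the definition $V^{f}_{[c^\infty],K}=V^{\bar t}_{[c^\infty],K'}|_K$. Lemma \ref{twist_iso} now gives the $A_K(G_E)$-isomorphism
\[
V^{f}_{[c^\infty],K}\simeq Ind_{c^\infty}\big(K[t,t^{-1}]/(f(t))\big).
\]

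Finally, for the simplicity assertion I would invoke Theorem \ref{simp}: because $(f(t))$ is maximal, $K[t,t^{-1}]/(f(t))$ is a simple $K[t,t^{-1}]\simeq KG_{c^\infty}$-module, whence $Ind_{c^\infty}(K[t,t^{-1}]/(f(t)))$ is a simple $A_K(G_E)$-module and therefore so is $V^{f}_{[c^\infty],K}$. I expect no real obstacle here: beyond citing Lemma \ref{twist_iso} and Theorem \ref{simp}, the only genuine content is the bookkeeping identification of $K'^{(a)}$ with $K[t,t^{-1}]/(f(t))$ as $KG_{c^\infty}$-modules, and the rest is substitution.
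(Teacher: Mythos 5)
Your proposal is correct and follows essentially the same route as the paper, which obtains the corollary immediately by specializing Lemma \ref{twist_iso} to $K'=K[t,t^{-1}]/(f(t))$ and $a=\bar t$ and invoking Theorem \ref{simp} for simplicity. The extra bookkeeping you supply (maximality of $(f(t))$, the identification of $K'^{(a)}$ with $K[t,t^{-1}]/(f(t))$ as $KG_{c^\infty}$-modules, and the choice of a twist with $a_c=\bar t$) is exactly the detail the paper leaves implicit.
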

\begin{remark}\label{rem1} For $a\in K^{*},\underline{a}\in (K^*)^{E_1}$ such that $a_c=a$ we have
$V^{t-a}_{[c^\infty, K]}\simeq V^{\underline{a}}_{[c^\infty], K}$ as $A_K(G_E)$-modules. 

In particular, $V^{x-1}_{[c^\infty, K]}\simeq V_{[c^\infty], K}$ by Remark \ref{rem2}.
\end{remark}
\begin{proof}
Consider $\theta: K[t,t^{-1}]\rightarrow K^{(a)}$ defined by $\theta(g)=g(a)$. Then $\theta$ is surjective. In fact, if $b\in K$, then $\exists g=t-b+a$ such that $g(a)=b.$

Moreover, $\theta((c^\infty,ln,c^\infty)g)=\theta(t^lg)=a^lg(a)=(c^\infty,ln,c^\infty)g(a)(c^\infty,ln,c^\infty)\theta(g)$ by Definition of $K^{(a)}$. Hence $\theta$ is a surjective $KG_{c^\infty}$-homomorphism. Therefore by theorem of module homomorphisms: 

$K[t,t^{-1}]/Ker (\theta)\simeq K^{(a)}$ as $KG_{c^\infty}$-modules.

$Ker(\theta)=\{g\in K[t,t^{-1}]|g(a)=0\}=(t-a)\implies K[t,t^{-1}]/(t-a)\simeq K^{(a)}$ as $KG_{c^\infty}$-modules.

Hence $KL_{c^\infty}\otimes_{KG_{c^\infty}}(K[t,t^{-1}]/(t-a))\simeq KL_{c^\infty}\otimes_{KG_{c^\infty}}K^{(a)}.$ Therefore by Corollories \ref{cor1} and \ref{cor3} $V^{t-a}_{[c^\infty, K]}\simeq V^{\underline{a}}_{[c^\infty], K}$ as $A_K(G_E)$-modules.

In particular, $V^{t-1}_{[c^\infty, K]}\simeq V_{[c^\infty], K}$ by Remark \ref{rem2}.
\end{proof}
\begin{remark}$[x]=\{y\in X|y\sim x\}\implies \alpha=(y,m,x)\in G_E$ for some $m$. By Definition of $G_E$: $r(\alpha)=x,s(\alpha)=y\implies$ $y$ is in the orbir of $x$.\end{remark}
Following Theorem \ref{specmod},Lemma \ref{noniso}, Proposition \ref{triv} and Corollaries \ref{cor},\ref{cor1},\ref{cor2}, Remark \ref{rem1} we now can classify all spectral simple modules over Leavitt path algebras
\begin{theorem} Let $E$ be a digraph, $K$ be a field.  Let $D\subseteq X$ be a subset containing exactly one element from each orbit. 
Then the set

\begin{multline}\label{set}\{V_{[x],K}|x\text{ is an infinite path }, x\in D\}\cup \{V_{[w],K}|w \text{ is not regular}\}\cup\\\cup \{V^f_{[c^\infty], K}|f \text{ is an irreducible polynomial in } K[t]|f\neq t-1, f\neq t, c \text{ is a simple closed path in E}\}\end{multline}

forms the full list of all pairwise nonisomorphic  simple spectral  modules over $L_K(E).$

In particular, all finite dimensional simple module over $L_K(E)$ are the modules in the list \eqref{set} with $x$'s which have finite orbits.
\end{theorem}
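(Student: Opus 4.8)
The plan is to assemble the classification by invoking Theorem~\ref{specmod}, which already reduces the problem to describing, up to orbit, the isotropy groups $G_x$ and their simple modules. By Lemma~\ref{iso} there are exactly two cases for $x\in X$, so the strategy is to translate the abstract parametrization ``pairs $(x,V)$'' into concrete Chen-type modules using the isomorphisms proved earlier. First I would fix a set $D\subseteq X$ containing one element from each orbit, as in Theorem~\ref{specmod}, noting that orbits in $G_E$ coincide with tail-equivalence classes $[x]$ (by the final Remark), so choosing one representative per orbit is the same as choosing one per class $[x]$.

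Next I would split $D$ according to Lemma~\ref{iso}. For $x\in D\cap(X\setminus E^\infty_{rat.})$ the isotropy group is trivial, so $KG_x\simeq K$ has a unique simple module $K$; by Proposition~\ref{triv} the corresponding induced module $Ind_x(K)$ is isomorphic to the Chen module $V_{[x],K}$. These representatives are precisely the irrational infinite paths and the finite paths ending at sinks; by the Remark following Definition~\ref{tail} the latter contribute exactly the modules $V_{[w],K}$ for $w$ a sink. This yields the first two families in~\eqref{set}. For $x\in D$ rational, by Remark~\ref{isom} I may take $x=c^\infty$ for a simple closed path $c$, and $KG_{c^\infty}\simeq K[t,t^{-1}]$; its simple modules are the $K[t,t^{-1}]/(f(t))$ for $f$ irreducible in $K[t]$, $f\neq t$. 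By Corollary~\ref{cor3} the induced module is the twisted Chen module $V^f_{[c^\infty],K}$, giving the third family.

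The remaining work is to verify that the listed modules are pairwise nonisomorphic and exhaustive, so that~\eqref{set} is an honest bijective transcription of Theorem~\ref{specmod}. Nonisomorphism across distinct orbits is immediate from Lemma~\ref{noniso}. Within a fixed orbit, nonisomorphism of the simple $KG_x$-modules transfers to the induced modules because $Res_x\,Ind_x(V)\simeq V$ by Proposition~\ref{res_in}, so two induced modules over the same $x$ are isomorphic only if the underlying $V$'s are; for the rational case this is exactly the statement that $K[t,t^{-1}]/(f)\simeq K[t,t^{-1}]/(g)$ iff $(f)=(g)$. The one point requiring care is the exclusion $f\neq t-1$ in~\eqref{set}: here I would invoke Remark~\ref{rem1}, which identifies $V^{t-1}_{[c^\infty],K}$ with the untwisted Chen module $V_{[c^\infty],K}$ already counted among the infinite-path family when $c^\infty\in D$, so omitting $f=t-1$ avoids a double listing.

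The main obstacle I anticipate is not any single hard computation but the bookkeeping of overlaps: one must confirm that the three families are genuinely disjoint after removing $f=t-1$, i.e. that no rational path's untwisted module is silently recounted, and that the choice of $D$ is compatible with the normalization $x=c^\infty$ used in the rational case via Remark~\ref{isom}. Once this consistency is checked, the finite-dimensional claim follows directly from the last sentence of Theorem~\ref{specmod}: a spectral simple module is finite-dimensional exactly when the orbit of $x$ is finite and $V$ is finite-dimensional over $K$, which for the trivial-isotropy case is automatic and for the rational case holds since $K[t,t^{-1}]/(f(t))$ is always finite-dimensional, so the finite-dimensional modules are precisely those in~\eqref{set} whose representative $x$ has finite orbit.
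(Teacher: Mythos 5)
Your proposal assembles the classification from exactly the same ingredients the paper uses (Theorem~\ref{specmod}, Lemma~\ref{iso}, Proposition~\ref{triv}, Corollary~\ref{cor3}, Lemma~\ref{noniso}, Proposition~\ref{res_in}, and Remark~\ref{rem1} for the exclusion of $f=t-1$), so it is essentially the paper's own argument, which the paper itself gives only as a one-line citation of these results. Note that, like the paper, you only obtain a classification of \emph{spectral} simple modules from Theorem~\ref{specmod}, so the claim that the list exhausts \emph{all} simple modules would still require an argument that every simple $L_K(E)$-module is spectral; this gap is inherited from the paper rather than introduced by you.
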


\bibliographystyle{plain}
\bibliography{bibliografia1}

\end{document}